\newtheorem{theorem}{Theorem}[section]
\newtheorem{lemma}[theorem]{Lemma}
\newtheorem{corollary}[theorem]{Corollary}
\newtheorem{conjecture}[theorem]{Conjecture}
\newtheorem{proposition}[theorem]{Proposition}
\newtheorem{question}[theorem]{Question}
\theoremstyle{remark}
\newtheorem*{remark}{Remark}
\newtheorem*{outline}{Outline}
\renewcommand{\epsilon}{\varepsilon}
\newlength{\bibitemsep}\setlength{\bibitemsep}{.4\baselineskip plus .05\baselineskip minus .05\baselineskip}
\newlength{\bibparskip}\setlength{\bibparskip}{0pt}
\let\oldthebibliography\thebibliography
\renewcommand\thebibliography[1]{%
  \oldthebibliography{#1}%
  \setlength{\parskip}{\bibitemsep}%
  \setlength{\itemsep}{\bibparskip}%
}
\DeclareMathAlphabet{\mathpzc}{OT1}{pzc}{m}{it}
\newcommand{\Z}{\mathbb{Z}}
\newcommand{\Q}{\mathbb{Q}}
\newcommand{\Hom}{\text{Hom}}
\newcommand{\tmfrac}[2]{\mbox{\large$\frac{#1}{#2}$}}
\title[Rational homology ribbon cobordism is a partial order]{Rational homology ribbon cobordism is a partial order} 
\author{Stefan Friedl}
\address{Stefan Friedl, Universit\"at Regensburg, 93040 Regensburg, Germany}
\email{sfriedl@gmail.com}
\author{Filip Misev}
\address{Filip Misev, Universit\"at Regensburg, 93040 Regensburg, Germany}
\email{filip.misev@mathematik.uni-regensburg.de}
\author{Raphael Zentner}
\address{Raphael Zentner, Universit\"at Regensburg, 93040 Regensburg, Germany}
\email{raphael.zentner@mathematik.uni-regensburg.de}
\begin{document}

\begin{abstract}
   We show that ribbon rational homology cobordism is a partial order within the class of irreducible 3-manifolds. This makes essential use of the methods recently employed by Ian Agol to show that ribbon knot concordance is a partial order. 
\end{abstract}

\maketitle

\thispagestyle{empty}

\section{Introduction}
It has recently been proved by Agol that ribbon concordance of knots is a partial order, \cite{Agol}. In this paper we prove an analogous statement for the preorder on irreducible, closed, connected, oriented 3-manifolds that is given by rational homology cobordisms.

Let $Y_0$ and $Y_1$ be closed, connected, oriented 3-manifolds. We say that a compact, connected, oriented, smooth 4-manifold $W$ is a \emph{cobordism from $Y_0$ to $Y_1$} if $\partial W = Y_1 \sqcup (-Y_0)$. 
A cobordism $W$ is called a \emph{$\Q$-homology cobordism from $Y_0$ to $Y_1$} if the inclusions of $Y_0$ and $Y_1$ into $W$ both induce an isomorphism in homology with $\Q$-coefficients.  Finally we say that $W$ is a \emph{ribbon cobordism from $Y_0$ to $Y_1$} 
if $W$ is built from $Y_0 \times [0,1]$ by attaching only $1$-handles and $2$-handles. 

We write $Y_1\geq Y_0$ if there exists a  ribbon  $\Q$-homology cobordism from $Y_0$ to $Y_1$. Clearly this defines a preorder, i.e. the relation $\geq$ is reflexive and transitive. It is less clear whether this is anti-symmetric and so defines a partial order, i.e. if $Y_0 \geq Y_1$ and $Y_1 \geq Y_0$, are then $Y_0$ and $Y_1$ homeomorphic?
\\

The following conjecture has been formulated by Daemi, Lidman, Vela-Vick, and Wong \cite[Conjecture~1.1]{DLVW}:

\begin{conjecture}\label{conj: rational homology ribbon cobordism conjecture}
	The preorder on the set of homeomorphism classes of closed, connected, oriented 3-manifolds given by ribbon $\Q$-homology cobordism is a partial order, i.e. if one has $Y_0 \geq Y_1$ and $Y_1 \geq Y_0$ then $Y_0$ and $Y_1$ are homeomorphic. 
\end{conjecture}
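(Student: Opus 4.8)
\begin{outline}
The plan is to isolate the purely group-theoretic content of Agol's method, which needs no irreducibility, and then to reduce the general conjecture to the irreducible case. First, note that the ribbon hypothesis alone controls fundamental groups. If $W$ is a ribbon $\Q$-homology cobordism from $Y_0$ to $Y_1$, then writing $W$ upside down as built from $Y_1\times[0,1]$ by attaching only $2$- and $3$-handles shows that $Y_1\hookrightarrow W$ induces a surjection $\pi_1(Y_1)\twoheadrightarrow\pi_1(W)$; and the doubling argument of Gordon, applied verbatim to the $1$/$2$-handle structure of $W$ relative to $Y_0$, shows that $Y_0\hookrightarrow W$ induces a split injection, i.e.\ there is a retraction $r\colon\pi_1(W)\twoheadrightarrow\pi_1(Y_0)$. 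Hence $Y_0\geq Y_1$ yields a surjection $\pi_1(Y_1)\twoheadrightarrow\pi_1(Y_0)$, and symmetrically $Y_1\geq Y_0$ yields a surjection $\pi_1(Y_0)\twoheadrightarrow\pi_1(Y_1)$. Fundamental groups of closed $3$-manifolds are residually finite (Hempel's theorem, a consequence of geometrization), hence Hopfian (Malcev), so mutual surjections between them are isomorphisms: $\pi_1(Y_0)\cong\pi_1(Y_1)$, and, unwinding, also $\pi_1(W)\cong\pi_1(Y_1)$ with the boundary inclusions inducing isomorphisms. No step here uses irreducibility.

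The second step converts the $\pi_1$-isomorphism into information about prime summands. By the Kneser--Milnor decomposition, $\pi_1(Y_i)$ is the free product of the groups of the prime summands, and by uniqueness of free product decompositions into freely indecomposable factors (including the $\Z$ factors) the isomorphism $\pi_1(Y_0)\cong\pi_1(Y_1)$ matches the nontrivial factors up to order. A $\Z$ factor forces an $S^1\times S^2$ summand; an infinite freely indecomposable factor forces an aspherical prime summand, and a closed aspherical $3$-manifold is determined up to homeomorphism by its fundamental group (Waldhausen in the Haken case, geometrization and Mostow rigidity in the remaining cases). Thus the prime decompositions of $Y_0$ and $Y_1$ agree \emph{except possibly} for the spherical space form summands (those with finite fundamental group), for which at this stage we only know that corresponding summands have isomorphic groups; in particular a lens space $L(p,q)$ is not yet distinguished from $L(p,q')$.

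The third and decisive step is to promote this to a homeomorphism on the spherical pieces, and here one must use the cobordism $W$ itself rather than only $\pi_1$. The natural strategy is to \emph{localize $W$ along the connected-sum spheres}: isotope the attaching regions of the $1$- and $2$-handles of $W$ off a system of essential $2$-spheres in $Y_0\times\{1\}$ realizing the prime decomposition, so as to exhibit $W$ as a $3$-handle (boundary) sum of ribbon $\Q$-homology cobordisms between the individual prime summands, and then invoke the irreducible case of the theorem --- which already covers lens spaces and the other spherical forms --- for each factor. When no such geometric splitting is available, one would instead try to separate the spherical summands by an invariant that is monotone under ribbon $\Q$-homology cobordism: the injections on Heegaard Floer and instanton Floer homology established by Daemi--Lidman--Vela-Vick--Wong behave functorially under connected sum and, for lens spaces, record the correction terms, so one asks whether this package is fine enough to pin down each spherical summand.

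I expect this third step to be the main obstacle, and in fact to be exactly what separates the full conjecture from the irreducible theorem proved here. Controlling embedded $2$-spheres inside the $4$-dimensional cobordism $W$ is delicate: a connected-sum sphere in $Y_0$ need not be isotopic off the handle attaching curves --- a handle may genuinely run between two prime summands --- so there is no formal reduction to the irreducible case, and one is forced either to prove a ``ribbon sphere theorem'' for such cobordisms or to supply a Floer-theoretic rigidity statement strong enough to recover a spherical space form. The crux is that this information is essentially the $\Z$-torsion of $H_1$, which a $\Q$-homology cobordism does not preserve and which is precisely what distinguishes lens spaces with the same fundamental group.
\end{outline}
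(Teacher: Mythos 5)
First, note that the statement in question is a \emph{conjecture}: the paper does not prove it in full generality. It proves only the irreducible case (Theorem \ref{thm: main theorem}) and the aspherical case (Theorem \ref{thm: main theorem-2}), and Conjecture \ref{conj: rational homology ribbon cobordism conjecture} remains open for reducible manifolds. Your outline also leaves a gap in exactly this situation, so to that extent you have correctly located the frontier. However, your Step 1 contains a concrete error. You claim that a ``doubling argument of Gordon'' yields a \emph{split} injection $\pi_1(Y_0)\hookrightarrow\pi_1(W)$, i.e.\ a retraction $\pi_1(W)\twoheadrightarrow\pi_1(Y_0)$. This is not what Gordon proves, and it is not what Daemi--Lidman--Vela-Vick--Wong prove (Proposition \ref{prop:basics-ribbon-cobordism}(2) here): the Gerstenhaber--Rothaus argument shows only that every representation of $\pi_1(Y_0)$ into a compact group $SO(N)$ \emph{extends} over $\pi_1(W)$; combined with residual finiteness this gives injectivity, nothing more. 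No retraction is produced. If such a retraction existed, composing it with the surjection $\pi_1(Y_1)\twoheadrightarrow\pi_1(W)$ would give the Hopfian argument you describe and the entire representation-variety machinery of Agol --- realizing $R_N(W)\subseteq R_N(Y_+)$, producing a surjective polynomial map $R_N(W)\to R_N(Y_-)$, and invoking the real-algebraic-geometry lemma to force $R_N(W)=R_N(Y_+)$ --- would be superfluous. That machinery is precisely what is needed to upgrade the surjection $(\iota_+)_*$ to an isomorphism (Theorem \ref{thm: ribbon to itself} and Corollary \ref{cor:map-on-pi1}). A purely group-theoretic shortcut of the kind you propose is not available.

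Your Step 2 is correct, and you rightly identify the reducible case as the genuine obstruction: there is no formal way to localize $W$ along the connect-sum spheres, since handle cocores may run between prime pieces. Two remarks, though. The lens-space ambiguity within the \emph{irreducible} case is not the sticking point in the paper: Theorem \ref{thm:huber} (Huber) classifies ribbon $\Q$-homology cobordisms between lens spaces precisely, and this is how the proof of Theorem \ref{thm: main theorem} closes the cyclic-$\pi_1$ case. And your final heuristic is imprecise: lens spaces with isomorphic fundamental group have isomorphic $H_1$, so what distinguishes $L(p,q)$ from $L(p,q')$ is not $\Z$-torsion in $H_1$ but the linking form on it (equivalently Reidemeister torsion or $d$-invariants) --- so the Floer-theoretic rigidity you gesture at would have to see correction terms, not just $H_1$.
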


We prove this conjecture for irreducible 3-manifolds.

\begin{theorem}\label{thm: main theorem}
	The preorder $\geq$ is a partial order on the set of homeomorphism classes of irreducible, closed, connected, oriented 3-manifolds. In particular, if $Y_0 \geq Y_1$ and $Y_1 \geq Y_0$ then $Y_0$ and $Y_1$ are  homeomorphic. 
\end{theorem}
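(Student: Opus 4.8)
\emph{Plan of proof.} The strategy is to transport Agol's argument for ribbon knot concordance \cite{Agol} from knot exteriors to closed $3$-manifolds. Since $\geq$ is already reflexive and transitive, only anti-symmetry needs proof, so assume $Y_0 \geq Y_1$ and $Y_1 \geq Y_0$, witnessed by ribbon $\Q$-homology cobordisms $W \colon Y_0 \to Y_1$ and $W' \colon Y_1 \to Y_0$ between irreducible, closed, connected, oriented $3$-manifolds. The plan is to extract from the two cobordisms an isomorphism $\pi_1(Y_0) \cong \pi_1(Y_1)$ and then invoke geometrization.

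The first input is the behaviour of the ribbon structure under $\pi_1$. Turned upside down, $W$ is obtained from $Y_1 \times [0,1]$ by attaching only $2$- and $3$-handles, and as neither kind contributes generators to the fundamental group, the inclusion induces a surjection $(\iota_1)_* \colon \pi_1(Y_1) \twoheadrightarrow \pi_1(W)$. The second input -- the heart of the matter -- is the $3$-manifold analogue of the theorems of Gordon and Agol: the inclusion $\iota_0 \colon Y_0 \hookrightarrow W$ is $\pi_1$-injective and, moreover, a retract, i.e.\ there is a homomorphism $r \colon \pi_1(W) \twoheadrightarrow \pi_1(Y_0)$ with $r \circ (\iota_0)_* = \mathrm{id}$. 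Following Agol, the way to prove this is to pass to the covering space of $W$ corresponding to $(\iota_0)_*\pi_1(Y_0)$ and show, by an explicit handle cancellation, that it deformation retracts onto a lift of $Y_0$: the ribbon hypothesis should force the attaching circles of the $2$-handles built on $Y_0 \times [0,1]$ to cancel the $1$-handles up to sliding within $Y_0$, while the $\Q$-homology hypothesis guarantees that the numbers of $1$- and $2$-handles agree, so that the cancellation is exactly accounted for. This is the step I expect to be the main obstacle, and it is where irreducibility is essential: it is used, via the sphere theorem, to control the embedded $2$-spheres that appear -- equivalently, to tame the $3$-handles in the dual decomposition -- which is presumably also why the general form of Conjecture~\ref{conj: rational homology ribbon cobordism conjecture} is out of reach by this method. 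One can repackage the conclusion, as Agol does, as the inequality $\#\mathrm{Hom}(\pi_1(Y_0),Q) \leq \#\mathrm{Hom}(\pi_1(Y_1),Q)$ for every finite group $Q$.

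Granting this, composing the surjection $(\iota_1)_* \colon \pi_1(Y_1) \twoheadrightarrow \pi_1(W)$ with the retraction $r$ produces a surjection $\pi_1(Y_1) \twoheadrightarrow \pi_1(Y_0)$; the same reasoning applied to $W'$ produces a surjection $\pi_1(Y_0) \twoheadrightarrow \pi_1(Y_1)$. Their composite is a surjective endomorphism of $\pi_1(Y_0)$, which is finitely generated and -- being the fundamental group of a closed $3$-manifold -- residually finite, hence Hopfian; therefore the composite is an isomorphism, and consequently both surjections are isomorphisms. Thus $\pi_1(Y_0) \cong \pi_1(Y_1)$.

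It remains to upgrade this to a homeomorphism. By the sphere theorem an irreducible closed oriented $3$-manifold has $\pi_2 = 0$, so each of $Y_0, Y_1$ is either aspherical (when $\pi_1$ is infinite) or a spherical space form (when $\pi_1$ is finite), and which alternative holds is read off from $\pi_1$. In the aspherical case $Y_0$ and $Y_1$ are $K(\pi,1)$'s, hence homotopy equivalent, and a closed aspherical $3$-manifold is determined up to homeomorphism by its fundamental group (a consequence of geometrization; e.g.\ Mostow--Prasad rigidity in the hyperbolic case), so $Y_0 \cong Y_1$. In the spherical case $\pi_1$ alone does not suffice -- witness lens spaces -- and a separate argument is needed; here I would use the extra data carried by the cobordism $W$, which by the above induces an isomorphism on $\pi_1$, for instance its Reidemeister torsion, or a direct analysis of the induced free $\pi_1(Y_0)$-action after passing to universal covers, to identify the two space forms. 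I expect this last case to require care but no essentially new idea.
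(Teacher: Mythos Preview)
Your proposal has a genuine gap at the central step. You describe the ``Agol step'' as a covering-space/handle-cancellation argument producing a retraction $r\colon \pi_1(W)\twoheadrightarrow \pi_1(Y_0)$, but this is not what Agol does and there is no reason to expect such an argument to succeed. Agol's method is entirely different: one studies the real algebraic varieties $R_N(\pi)=\Hom(\pi,SO(N))$, uses the surjection $(\iota_1)_*$ to realise $R_N(W)$ as an algebraic \emph{subset} of $R_N(Y_1)$, uses Gerstenhaber--Rothaus to show that $(\iota_0)^*\colon R_N(W)\to R_N(Y_0)$ is \emph{surjective}, and then applies an algebraic-geometry lemma (a surjective polynomial map from a subvariety onto the ambient variety forces equality) to the glued cobordism $W\cup_{Y_0}W'$ from $Y_1$ to itself. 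The output is that $(\iota_1)_*\colon\pi_1(Y_1)\to\pi_1(W)$ is an \emph{isomorphism}, not that $\pi_1(Y_0)$ is a retract. Your handle-cancellation sketch does not work: in the cover corresponding to $(\iota_0)_*\pi_1(Y_0)$ the numbers of lifted $1$- and $2$-handles need not match (the index could be infinite), and even when they do there is no mechanism forcing geometric cancellation. Relatedly, irreducibility plays \emph{no} role in this step; the paper's Theorem~\ref{thm: ribbon to itself} and Corollary~\ref{cor:map-on-pi1} hold for arbitrary closed oriented $3$-manifolds, and irreducibility enters only afterwards to pass from $\pi_1$-isomorphism to homeomorphism.

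Your endgame also underestimates the lens-space case. Once one knows $\pi_1(Y_0)\cong\pi_1(Y_1)$ is finite cyclic, saying ``use Reidemeister torsion of $W$'' or ``analyse the universal cover'' is not enough: one needs a genuine obstruction to ribbon $\Q$-homology cobordism between non-homeomorphic lens spaces with the same $\pi_1$, and the paper imports exactly such a result (Huber's classification, Theorem~\ref{thm:huber}). In the aspherical case the paper also proceeds differently from your Hopficity argument: rather than producing surjections in both directions, it uses the isomorphism $(\iota_1)_*$ and the monomorphism $(\iota_0)_*$ to build a map $f\colon Y_0\to Y_1$ realising $\alpha=(\iota_1)_*^{-1}\circ(\iota_0)_*$, shows via a homological diagram that $f$ has degree $\pm 1$, and hence that $\alpha$ is surjective as well. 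Your Hopficity route would be fine \emph{if} you actually had surjections $\pi_1(Y_1)\twoheadrightarrow\pi_1(Y_0)$ and vice versa, but those rest on the unproved retraction claim.
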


In fact for aspherical 3-manifolds we can prove a refinement.

\begin{theorem}\label{thm: main theorem-2}
	The preorder $\geq$ is a partial order on the set of orientation-preserving homeomorphism classes of aspherical, closed, connected, oriented 3-manifolds. In particular, if $Y_0 \geq Y_1$ and $Y_1 \geq Y_0$ then there exists an orientation-preserving homeomorphism from  $Y_0$ to $Y_1$.
\end{theorem}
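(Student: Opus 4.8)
The plan is to take the homeomorphism produced by Theorem~\ref{thm: main theorem} --- which a priori need not respect orientations --- and to repair it using the orientation data carried by one of the two ribbon $\Q$-homology cobordisms between $Y_0$ and $Y_1$. So suppose $Y_0$ and $Y_1$ are aspherical with $Y_0 \ge Y_1$ and $Y_1 \ge Y_0$; by Theorem~\ref{thm: main theorem} they are homeomorphic, and it remains to make the homeomorphism orientation-preserving. First I would reduce this to a homotopy-theoretic assertion. Since $Y_0$ and $Y_1$ are $K(\pi,1)$-spaces, any isomorphism $\pi_1(Y_1) \to \pi_1(Y_0)$ is realised by a homotopy equivalence, which by the topological rigidity of closed aspherical $3$-manifolds (a consequence of geometrisation, and due to Waldhausen in the Haken case) is homotopic to a homeomorphism; moreover such a homotopy equivalence is orientation-preserving if and only if it has degree $+1$. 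Hence it suffices to construct a degree-$+1$ homotopy equivalence $f\colon Y_1 \to Y_0$, for then $f^{-1}$ can be replaced by an orientation-preserving homeomorphism $Y_0 \to Y_1$.

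To build $f$, fix a ribbon $\Q$-homology cobordism $W$ from $Y_0$ to $Y_1$ (it exists since $Y_1 \ge Y_0$), with boundary inclusions $i_0\colon Y_0 \hookrightarrow W$ and $i_1\colon Y_1 \hookrightarrow W$. I would extract from the group-theoretic machinery underlying Theorem~\ref{thm: main theorem} (the $3$-dimensional incarnation of Agol's method \cite{Agol}) the following two facts about $W$: (a) $i_1$ induces a surjection $\pi_1(Y_1) \twoheadrightarrow \pi_1(W)$, which is immediate because the handle decomposition of $W$ dual to the given ribbon one is built on $Y_1$ using only $2$- and $3$-handles; and (b) the homomorphism $(i_0)_*\colon \pi_1(Y_0) \to \pi_1(W)$ admits a retraction, so that --- $Y_0$ being aspherical --- there is a map $c\colon W \to Y_0$ with $c \circ i_0 \simeq \mathrm{id}_{Y_0}$. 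Fact (b) is the $3$-manifold analogue of Gordon's observation that the exterior of a ribbon concordance retracts onto the exterior of its lower knot; it is where the ribbon hypothesis genuinely enters, and I would expect it to be established in the earlier part of the paper. Granting (a) and (b), set $f := c \circ i_1\colon Y_1 \to Y_0$. Because $\partial W = Y_1 \sqcup (-Y_0)$, the relative fundamental class satisfies $\partial [W] = [Y_1] - [Y_0]$ in $H_3(\partial W;\Z) \cong H_3(Y_1;\Z) \oplus H_3(Y_0;\Z)$, and exactness of $H_4(W,\partial W;\Z) \xrightarrow{\partial} H_3(\partial W;\Z) \to H_3(W;\Z)$ yields $(i_1)_*[Y_1] = (i_0)_*[Y_0]$ in $H_3(W;\Z)$. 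Consequently $f_*[Y_1] = c_*(i_1)_*[Y_1] = c_*(i_0)_*[Y_0] = (c \circ i_0)_*[Y_0] = [Y_0]$, so $f$ has degree $+1$. Moreover $f$ is surjective on $\pi_1$, being the composite of $c_*$ (onto, as a retraction) with $(i_1)_*$ (onto by (a)). Since $\pi_1(Y_0) \cong \pi_1(Y_1)$ by Theorem~\ref{thm: main theorem} and these $3$-manifold groups are residually finite (by geometrisation), hence Hopfian, the surjection $f_*\colon \pi_1(Y_1) \twoheadrightarrow \pi_1(Y_0)$ is an isomorphism; as $Y_0$ and $Y_1$ are $K(\pi,1)$'s, $f$ is then a homotopy equivalence. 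Thus $f$ is a degree-$+1$ homotopy equivalence $Y_1 \to Y_0$, which completes the proof by the first step.

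The step I expect to carry the real weight lies behind this argument rather than in it: fact (b), and --- via Theorem~\ref{thm: main theorem} --- the isomorphism $\pi_1(Y_0) \cong \pi_1(Y_1)$, both of which depend on Agol's method. The point that is genuinely new here is that one must not work with an abstract isomorphism $\pi_1(Y_0) \cong \pi_1(Y_1)$, which carries no orientation data, but route everything through the cobordism $W$ and the retraction $c$, so that the identity $(i_1)_*[Y_1] = (i_0)_*[Y_0]$ in $H_3(W;\Z)$ can be used to pin the degree of $f$ to $+1$; the only further delicate points are the sign conventions in $\partial[W] = [Y_1] - [Y_0]$ and the appeal to $3$-dimensional topological rigidity with control on the degree. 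Asphericity is essential exactly at these last two places --- it is what makes $\pi_1$ together with the fundamental class a complete invariant and lets a $\pi_1$-isomorphism be promoted to a homeomorphism of prescribed degree --- which is why Theorem~\ref{thm: main theorem-2} is stated for aspherical, rather than merely irreducible, manifolds.
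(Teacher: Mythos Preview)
Your argument hinges on fact (b) --- the existence of a retraction $c\colon W \to Y_0$ with $c\circ i_0 \simeq \mathrm{id}_{Y_0}$ --- and this is where it breaks. You attribute the analogous statement to Gordon, but Gordon's Lemma~3.1 (and, in this paper, Proposition~\ref{prop:basics-ribbon-cobordism}(2) via Gerstenhaber--Rothaus) proves only that $(i_0)_*$ is \emph{injective} on $\pi_1$, not that it splits. A retraction $\pi_1(W)\to\pi_1(Y_0)$ would amount to solving, in the group $\pi_1(Y_0)$ itself, the system of equations imposed by the $2$-handle relators on the free $1$-handle generators; this is a Kervaire-type problem and is not available in general. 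Gerstenhaber--Rothaus produce solutions only after mapping into a compact Lie group. No such retraction is established anywhere in the paper, and without it you cannot form $f=c\circ i_1$, so both the degree computation $f_*[Y_1]=[Y_0]$ and the $\pi_1$-surjectivity of $f$ collapse.

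There is also a circularity: you invoke Theorem~\ref{thm: main theorem} to obtain the abstract isomorphism $\pi_1(Y_0)\cong\pi_1(Y_1)$ feeding your Hopfian step, but in the paper the aspherical case of Theorem~\ref{thm: main theorem} is deduced \emph{from} Theorem~\ref{thm: main theorem-2}, so that implication runs the wrong way.

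The paper avoids both problems by building the comparison map in the opposite direction. Corollary~\ref{cor:map-on-pi1} (this is where Agol's method and \emph{both} ribbon cobordisms enter) gives that $(i_1)_*\colon\pi_1(Y_1)\to\pi_1(W)$ is already an isomorphism; one then sets $\alpha:=(i_1)_*^{-1}\circ(i_0)_*\colon\pi_1(Y_0)\to\pi_1(Y_1)$ and, using asphericity of $Y_1$, realises $\alpha$ by a map $f\colon Y_0\to Y_1$. A diagram chase through group homology (Lemma~\ref{lemma: commutative diagram} and Proposition~\ref{prop:basics-ribbon-cobordism-h}) shows $f$ has degree $\pm1$, so $\alpha$ is also surjective and hence an isomorphism; Theorem~\ref{thm:iso-on-pi1-implies-homeo}(2) then yields a homeomorphism $g$ with $g_*=\alpha$, and the same diagram together with $(i_0)_*[Y_0]=(i_1)_*[Y_1]$ forces $g_*[Y_0]=[Y_1]$. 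Your instinct to pin the orientation via $H_3(W;\Z)$ is exactly right; what you are missing is that Corollary~\ref{cor:map-on-pi1} already hands you the homotopy collapse of $W$ that you want --- but onto $Y_1$, not onto $Y_0$.
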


It is not clear to us whether the conclusion of Theorem~\ref{thm: main theorem-2} also holds for irreducible 3-manifolds that are not aspherical. This leads us to the following question.

\begin{question}
Does there exist a spherical oriented 3-manifold $Y$ with $Y\leq -Y$?
\end{question} 

In the proofs of Theorems~\ref{thm: main theorem} and~\ref{thm: main theorem-2} we make essential use of the methods employed by Agol to prove that ribbon concordance is a partial order. More precisely, Agol's  methods  apply to the situation we are considering and provide us with the following theorem.

\begin{theorem}\label{thm: ribbon to itself} Suppose $Y$ is a closed, connected, oriented 3-manifold.
		Suppose $W$ is a ribbon $\Q$-homology cobordism from $Y_- \cong Y$ to $Y_+ \cong Y$, i.e. $Y_+ \geq Y_-$ (and so from $Y$ to itself). Then the inclusion $\iota_+ \colon Y_+ \to W$ induces an isomorphism on fundamental groups.
\end{theorem}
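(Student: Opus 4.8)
The plan is to mimic Agol's fundamental-group argument from \cite{Agol}, adapted to the 3-manifold setting. The starting observation is that since $W$ is built from $Y \times [0,1]$ by attaching only $1$- and $2$-handles, the inclusion $\iota_- \colon Y_- \to W$ is surjective on $\pi_1$ (the $1$-handles are killed, in terms of generators, by the $2$-handles or merely add generators; in any case attaching handles of index $\le 2$ can only add relations or generators that are then possibly removed, but crucially $\iota_-$ stays $\pi_1$-surjective because $W$ deformation retracts onto $Y_-$ with $1$- and $2$-cells attached). Dually, reading the cobordism upside down from $Y_+$, the cobordism $-W$ from $Y_+$ to $Y_-$ is built with $2$- and $3$-handles, so $\iota_+ \colon Y_+ \to W$ is injective on $\pi_1$ is what we want; the nontrivial direction is surjectivity of $(\iota_+)_*$, equivalently injectivity of the corresponding map in Agol's setup. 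I would set $\Gamma = \pi_1(Y)$, $G = \pi_1(W)$, and study the two maps $f_\pm = (\iota_\pm)_* \colon \Gamma \to G$.

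The key step is the representation-variety / group-cohomology argument. Following Agol, fix a field $k$ and consider, for each $n$, the set of irreducible (or completely reducible) $n$-dimensional $k$-representations of the relevant groups. The ribbon hypothesis gives that $f_-$ is surjective, so $f_-^*$ on representation varieties is injective; one then wants to show the composite $\Gamma \xrightarrow{f_+} G \xrightarrow{} \Gamma$-ish comparison forces $f_+$ to be surjective too. The mechanism is: using the $\Q$-homology cobordism hypothesis together with a count (via Euler characteristics of group cohomology with coefficients in $\mathrm{ad}\rho$, or via the Alexander-module / Reidemeister-torsion bookkeeping that Agol uses), one shows that the number of irreducible representations of $G$ of each dimension is bounded above by the number for $\Gamma$ pulled back along $f_+$. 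Combined with $f_-$ being surjective (hence $G$ having \emph{at least} as many such representations as $\Gamma$), one deduces $f_+^*$ is a bijection on each representation variety. Since a homomorphism of finitely generated (residually finite, here since $Y$ is a $3$-manifold) groups that induces a bijection on $\mathrm{Hom}(-, GL_n(k))$ for all $n$ and suitable $k$ must be an isomorphism, we conclude $(\iota_+)_*$ is an isomorphism.

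The main obstacle I anticipate is making the counting argument rigorous in the $3$-manifold setting rather than the knot-complement setting: Agol works with knot groups and their $2$-fold (or $n$-fold) branched/cyclic covers and exploits the special structure of knot complements (Wirtinger presentations, deficiency-one groups, the Milnor torsion formula). For closed $3$-manifolds one does not have deficiency one, so the Euler-characteristic bookkeeping must instead come from the handle decomposition of $W$: the ribbon structure says $W$ has a handle decomposition on $Y_- \times I$ with $h_1$ one-handles and $h_2$ two-handles, and the $\Q$-homology cobordism condition forces $h_1 = h_2$ (since $H_*(W, Y_-; \Q) = 0$). This equality is exactly what pins down the cohomological dimension count and lets the representation-counting inequality become an equality. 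I would carry this out by: (1) recording the $\pi_1$-surjectivity of $\iota_-$ and the handle count $h_1 = h_2$; (2) setting up the relevant twisted chain complex $C_*(\tilde W)$ over the group ring and noting its Euler characteristic vanishes; (3) running Agol's representation-variety comparison to get bijectivity on $\mathrm{Hom}(-,GL_n(\bar{\mathbb{F}}_p))$ for all $n, p$; (4) invoking residual finiteness of $3$-manifold groups (Hempel) to upgrade bijection-on-finite-quotients-detecting-representations into $(\iota_+)_*$ being an isomorphism. Step (3) is where the real content of Agol's argument is imported, and verifying that his input hypotheses hold verbatim here is the delicate point.
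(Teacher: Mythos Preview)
Your proposal has two problems, one a slip and one a genuine gap. The slip is that you have the roles of $\iota_\pm$ reversed: since $W$ is obtained from $Y_-\times[0,1]$ by adding $1$-handles (which \emph{add} generators) and then $2$-handles, the inclusion $\iota_-$ is \emph{not} $\pi_1$-surjective in general; it is $\iota_+$ that is $\pi_1$-surjective, because from that end $W$ is built with only $2$- and $3$-handles. This is exactly Proposition~\ref{prop:basics-ribbon-cobordism}: $(\iota_+)_*$ is an epimorphism, $(\iota_-)_*$ is a monomorphism. So the nontrivial direction is \emph{injectivity} of $(\iota_+)_*$.

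The substantive gap is the mechanism. Agol does not use Euler characteristics of twisted cohomology, Reidemeister torsion, or counting of representations over finite fields; your steps (2)--(3) are not his argument and it is unclear how a ``count'' would even make sense for the infinite real algebraic sets involved. The actual engine is this: work with $R_N(\pi)=\Hom(\pi,SO(N))$ as a real algebraic set. Surjectivity of $(\iota_+)_*$ realises $R_N(W)$ as an algebraic \emph{subset} of $R_N(Y_+)$. The key nontrivial input is the Gerstenhaber--Rothaus theorem \cite{Gerstenhaber-Rothaus_1962}: because the number of $1$-handles equals the number of $2$-handles (this is where the $\Q$-homology cobordism hypothesis enters) and because $SO(N)$ is \emph{compact}, every $SO(N)$-representation of $\pi_1(Y_-)$ extends over $\pi_1(W)$, i.e.\ $(\iota_-)^*\colon R_N(W)\to R_N(Y_-)$ is \emph{surjective}. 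Composing with the polynomial isomorphism $R_N(Y_-)\cong R_N(Y_+)$ coming from $Y_-\cong Y_+$ gives a surjective polynomial map from the algebraic subset $R_N(W)\subseteq R_N(Y_+)$ onto $R_N(Y_+)$. Agol's algebraic-geometry lemma \cite[Lemma~A.2]{Agol} (if $X\subseteq Z$ are real algebraic sets and there is a surjective polynomial map $X\to Z$, then $X=Z$) then forces $R_N(W)=R_N(Y_+)$ for every $N$. Only at the very last step does residual finiteness enter, together with the fact that every finite group embeds in some $SO(N)$, to conclude that $(\iota_+)_*$ has trivial kernel. Your outline is missing both Gerstenhaber--Rothaus and the algebraic-geometry lemma, which together are the entire content of the proof; the twisted-chain-complex route you sketch does not supply a substitute for either.
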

Using Theorem~\ref{thm: ribbon to itself}  one can easily prove the following corollary which is the key ingredient in the proofs of Theorems~\ref{thm: main theorem} and~\ref{thm: main theorem-2}.
\begin{corollary}\label{cor:map-on-pi1}
	Suppose $W_0$ is a ribbon $\Q$-homology cobordism from $Y_0$ to $Y_1$, so that $Y_1 \geq Y_0$. Suppose that $W_1$ is a ribbon $\Q$-homology cobordism from $Y_1$ to $Y_0$, so that $Y_0 \geq Y_1$. Then the injection $\iota_0 \colon Y_1 \to W_0$ induces an isomorphism of fundamental groups, and likewise for the injection $\iota_1 \colon Y_0 \to W_1$.  
\end{corollary}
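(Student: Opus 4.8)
The plan is to reduce the statement to Theorem~\ref{thm: ribbon to itself} by stacking the two cobordisms. Concretely, I would form the composite cobordism $W := W_0 \cup_{Y_1} W_1$, obtained by gluing $W_0$ to $W_1$ along $Y_1$. Since $W_0$ goes from $Y_0$ to $Y_1$ and $W_1$ goes from $Y_1$ to $Y_0$, the composite $W$ is a cobordism from $Y_0$ to $Y_0$. First I would check that $W$ is again a ribbon $\Q$-homology cobordism: it is ribbon because a handle decomposition of $W_0$ relative to $Y_0\times[0,1]$ using only $1$- and $2$-handles can be followed by the corresponding handle decomposition of $W_1$ relative to $Y_1\times[0,1]$, again using only $1$- and $2$-handles, so $W$ is built from $Y_0\times[0,1]$ with $1$- and $2$-handles only; and it is a $\Q$-homology cobordism by a Mayer--Vietoris argument, since the inclusions $Y_1\hookrightarrow W_0$ and $Y_1\hookrightarrow W_1$ are $\Q$-homology equivalences, hence so are $Y_0\hookrightarrow W_0\hookrightarrow W$ and $Y_0\hookrightarrow W_1 \hookrightarrow W$.

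Now Theorem~\ref{thm: ribbon to itself} applies to $W$: it is a ribbon $\Q$-homology cobordism from a copy of $Y_0$ to a copy of $Y_0$, so the inclusion of the outgoing end $Y_0 \hookrightarrow W$ induces an isomorphism on $\pi_1$. Next I would factor this inclusion through $W_1$: the outgoing boundary of $W$ is the outgoing boundary of $W_1$, so $Y_0 \hookrightarrow W$ factors as $Y_0 \xrightarrow{\iota_1} W_1 \hookrightarrow W$. Since the composite is surjective on $\pi_1$, the second map $\pi_1(W_1)\to\pi_1(W)$ is surjective, and since the composite is injective, $(\iota_1)_*\colon \pi_1(Y_0)\to \pi_1(W_1)$ is injective. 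For injectivity of $(\iota_1)_*$ being also surjective, I would use van Kampen on $W = W_0\cup_{Y_1} W_1$: as $W$ is ribbon, the inclusion of either boundary component into $W$ is $\pi_1$-surjective (handle attachments of index $\geq 1$ can only kill or leave fixed generators; more precisely ribbon cobordisms are built with $1$- and $2$-handles, and the standard fact is that then $\pi_1$ of the incoming boundary surjects), in particular $\pi_1(Y_1)\to\pi_1(W_0)$ is surjective; combined with the van Kampen pushout this forces $\pi_1(W_1)\to \pi_1(W)$ to be an isomorphism, and hence $(\iota_1)_*$ is an isomorphism. By the symmetric argument, exchanging the roles of $W_0$ and $W_1$ (i.e. forming $W_1\cup_{Y_0} W_0$, a ribbon $\Q$-homology cobordism from $Y_1$ to $Y_1$), the inclusion $\iota_0\colon Y_1\to W_0$ also induces an isomorphism on fundamental groups.

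The main obstacle I anticipate is the bookkeeping around the van Kampen argument: one must be careful that the relevant inclusion-induced maps on $\pi_1$ are the ones Theorem~\ref{thm: ribbon to itself} controls (the \emph{outgoing} end), and that ``ribbon'' is preserved under composition in the precise sense that the stacked cobordism still has a handle decomposition on $Y_0\times[0,1]$ with only $1$- and $2$-handles — this is where orientations and the identification of the middle level $Y_1$ matter. A secondary point to get right is that a ribbon cobordism is $\pi_1$-surjective from its incoming boundary; this is standard (adding $1$-handles can only enlarge $\pi_1$ via free products with $\Z$, which are then killed, and $2$-handles impose relations, so from the top the map is onto), but it should be stated cleanly since it is used to upgrade injectivity of $(\iota_1)_*$ to an isomorphism. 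Everything else is a formal diagram chase once Theorem~\ref{thm: ribbon to itself} is in hand.
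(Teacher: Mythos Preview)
Your overall strategy is exactly the paper's: stack the two cobordisms, apply Theorem~\ref{thm: ribbon to itself} to the composite, and factor the inclusion of the outgoing end through the intermediate piece. The paper glues along $Y_0$ (obtaining a self-cobordism of $Y_1$) and factors $Y_1\hookrightarrow W_0\hookrightarrow W$ to conclude for $\iota_0$; you glue along $Y_1$ and argue for $\iota_1$ instead, which is the symmetric version.

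There is, however, a genuine slip in your final step. From the van Kampen pushout $\pi_1(W)\cong \pi_1(W_0)\ast_{\pi_1(Y_1)}\pi_1(W_1)$ together with surjectivity of $\pi_1(Y_1)\to\pi_1(W_0)$ you only get that $\pi_1(W_1)\to\pi_1(W)$ is \emph{surjective}; its kernel is the normal closure of the image in $\pi_1(W_1)$ of $\ker\big(\pi_1(Y_1)\to\pi_1(W_0)\big)$, which you have no a~priori control over. So the claimed isomorphism does not follow from van Kampen alone, and you cannot deduce surjectivity of $(\iota_1)_*$ this way.

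The fix is to drop the van Kampen detour entirely. Since $W_1$ is a ribbon cobordism from $Y_1$ to $Y_0$, the map $(\iota_1)_*\colon \pi_1(Y_0)\to\pi_1(W_1)$ is already surjective by Proposition~\ref{prop:basics-ribbon-cobordism}(1); combined with the injectivity you correctly extracted from the factorization $\pi_1(Y_0)\to\pi_1(W_1)\to\pi_1(W)$, this finishes the proof. This is precisely what the paper does. Relatedly, your parenthetical explanation is backwards: for a ribbon cobordism it is the \emph{outgoing} boundary whose $\pi_1$ surjects (flip the cobordism to see it as $2$- and $3$-handles on the top), not the incoming one and certainly not ``either'' boundary; $1$-handles added to $Y_0\times[0,1]$ enlarge $\pi_1$, so $\pi_1(Y_0)\to\pi_1(W)$ need not surject.
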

\begin{remark}
		We use the convention of \cite{DLVW}, in which Conjecture \ref{conj: rational homology ribbon cobordism conjecture} has been formulated, where $Y_1 \geq Y_0$ means that there is a ribbon cobordism {\em from} $Y_0$ {\em to} $Y_1$, as defined above. We would like to point out that this may result in some confusion when comparing with Gordon's initial convention in \cite{Gordon} and the one employed by Agol \cite{Agol}, where a ribbon concordance goes from $K_1$ to $K_0$ if the exterior $E(C)$ of the concordance $C$ is obtained by adding only 1-handles and 2-handles to $E(K_0) \times [0,1]$. In other words, in their convention, a ribbon cobordism goes {\em from} the more complex object {\em to} the simpler one. 
\end{remark}

\begin{outline}
	In the second section we will state and prove some auxiliary results that we will need for the proof of our main results, Theorems \ref{thm: main theorem} and~\ref{thm: main theorem-2}.
 In the third section we provide the proof of Theorem \ref{thm: main theorem-2}.
Afterwards in the fourth section we will use 
Theorem \ref{thm: main theorem-2} as an ingredient in our proof of Theorem \ref{thm: main theorem}. Finally the fifth section contains a sketch of proof of Theorem \ref{thm: ribbon to itself} which is almost verbatim identical to Agol's proof in the context of knot complements, together with the proof of Corollary \ref{cor:map-on-pi1}. 
\end{outline}

\subsection*{Acknowledgments} 
SF and FM were supported by the SFB 1085 ``higher invariants'' at the University of Regensburg, funded by the DFG. RZ is thanking the DFG for support through the Heisenberg program. Furthermore, we wish to thank Ederson Dutra and Lukas Lewark for helpful conversations, and Lukas Lewark for comments on a preliminary version of this article. Also we would like to thank Brendan Owens for pointing out a misleading typo in the proof of Theorem \ref{thm: main theorem}

\section{Preparations}\label{sec: preparations}
In this short section we collect a few basic facts that we will use in the proof of 
Theorem \ref{thm: main theorem}. The following appears as \cite[Proposition 2.1]{DLVW}.

\begin{proposition}[Daemi, Lidman, Vela-Vick, Wong]\label{prop:basics-ribbon-cobordism}
Let $Y_0$ and $Y_1$ be closed, connected, oriented 3-manifolds
and let $W$ be a cobordism from $Y_0$ to $Y_1$.
We denote by $\iota_0\colon Y_0\to W$ and $\iota_1\colon Y_1\to W$ the obvious inclusion maps. 
\begin{enumerate}
\item If $W$ is a ribbon cobordism, then the map  $(\iota_{1})_*\colon \pi_1(Y_1)\to \pi_1(W)$ is an epimorphism.
\item If $W$ is a ribbon $\Q$-homology cobordism, then  the map  $(\iota_{0})_*\colon \pi_1(Y_0)\to \pi_1(W)$ is a monomorphism.
\end{enumerate}
\end{proposition}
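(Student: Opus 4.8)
The plan is to establish the two assertions separately; only (2) has real content. For (1), I would turn the handle decomposition of $W$ upside down. Viewed as a cobordism from $Y_1$ to $Y_0$, each $k$-handle of $W$ becomes a $(4-k)$-handle, so $W$ is obtained from $Y_1\times[0,1]$ by attaching only $2$-handles (dual to the original $2$-handles) and $3$-handles (dual to the original $1$-handles). Up to homotopy this presents $W$ as $Y_1$ with finitely many cells of dimension $2$ and $3$ attached, and attaching such cells can only replace $\pi_1$ by a quotient of itself. Hence $(\iota_1)_*\colon\pi_1(Y_1)\to\pi_1(W)$ is surjective.

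For (2), the point of departure is that $W$ is built from $Y_0\times[0,1]\simeq Y_0$ by attaching $h_1$ one-handles and $h_2$ two-handles, so $W$ is homotopy equivalent to $Y_0$ with $h_1$ one-cells and $h_2$ two-cells attached. Writing $F=F_{h_1}$ for the free group on loops through the cores of the $1$-handles and $r_1,\dots,r_{h_2}\in\pi_1(Y_0)\ast F$ for the attaching words of the $2$-handles, this gives an isomorphism $\pi_1(W)\cong(\pi_1(Y_0)\ast F)\big/\langle\langle r_1,\dots,r_{h_2}\rangle\rangle$ under which $(\iota_0)_*$ is the canonical map. Since $W$ is a $\Q$-homology cobordism, $H_*(W,Y_0;\Q)=0$; on the other hand the relative handle chain complex with $\Q$-coefficients is concentrated in degrees $2$ and $1$, where it reads $\Q^{h_2}\xrightarrow{\ \partial\ }\Q^{h_1}$, and in the natural bases $\partial$ is (up to sign) the matrix whose $(j,i)$-entry is the exponent sum of the $j$-th core generator in $r_i$. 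Vanishing of $H_2(W,Y_0;\Q)$ and $H_1(W,Y_0;\Q)$ forces $\partial$ to be injective and surjective, so $h_1=h_2=:n$ and this exponent-sum (``Jacobian'') matrix is nonsingular over $\Q$.

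I would finish by invoking the theorem of Gerstenhaber and Rothaus: if a group $G$ embeds into a connected compact Lie group, and $\overline{G}$ is obtained from $G$ by adjoining $n$ new generators and $n$ relators whose exponent-sum matrix is nonsingular over $\Q$, then $G\hookrightarrow\overline{G}$. To apply this with $G=\pi_1(Y_0)$, which itself need not embed into a compact Lie group, I would use residual finiteness of closed $3$-manifold groups: for any $g\neq 1$ in $\pi_1(Y_0)$ there is a finite-dimensional unitary representation $\rho$ of $\pi_1(Y_0)$ with $\rho(g)\neq 1$ (factoring through a finite quotient), and Gerstenhaber--Rothaus extends $\rho$ over $r_1,\dots,r_n$ to a representation of $\pi_1(W)$ on which the image of $g$ is still nontrivial. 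As $g$ was arbitrary, $(\iota_0)_*$ is injective.

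The step I expect to be most delicate is the bookkeeping in (2): identifying the boundary operator of the relative handle complex with the exponent-sum matrix of the $2$-handle attaching words, so that the $\Q$-homology hypothesis translates precisely into the nonsingularity needed by Gerstenhaber--Rothaus, and packaging $\pi_1(Y_0)$ (via residual finiteness) so that the compact-Lie-group hypothesis of that theorem is met. Part (1), by contrast, is essentially formal.
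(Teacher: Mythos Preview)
Your proposal is correct and follows the same route the paper indicates: the upside-down handle argument for (1), and for (2) the Gerstenhaber--Rothaus theorem combined with residual finiteness of $3$-manifold groups, exactly as in Gordon's original argument that the paper cites. You have simply supplied the details (the identification of the relative handle boundary with the exponent-sum matrix, and the reduction via finite quotients) that the paper leaves implicit in its reference to \cite{DLVW}, \cite{Gerstenhaber-Rothaus_1962}, and \cite{Gordon}.
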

By definition, a ribbon cobordism $W$ from $Y_0$ to $Y_1$ is obtained from $Y_0\times [0,1]$ by attaching 1-handles and 2-handles. Flipping $W$ upside down, we see that $W$ can equivalently be viewed as $Y_1\times [0,1]$ with some 2-handles and 3-handles attached. This immediately implies the first statement. For the second statement, Daemi, Lidman, Vela-Vick, and Wong use Theorem 2 in Gerstenhaber-Rothaus' work \cite{Gerstenhaber-Rothaus_1962}, much in the same way as initially done by Gordon in \cite[Proof of Lemma 3.1]{Gordon}, using the residual finiteness of 3-manifold groups. 
\\


The next proposition gives us some homological information about $\Q$-homology cobordisms.

\begin{proposition}\label{prop:basics-ribbon-cobordism-h}
Let $Y_0$ and $Y_1$ be closed, connected, oriented 3-manifolds. We consider their fundamental classes $[Y_0]\in H_3(Y_0;\Z)$ and $[Y_1]\in H_3(Y_1;\Z)$. 
Let $W$ be a cobordism from $Y_0$ to $Y_1$.
 If $W$ is a $\Q$-homology cobordism, then the maps  $(\iota_{0})_*\colon H_3(Y_0;\Z)\to H_3(W;\Z)$ and $(\iota_{1})_*\colon H_3(Y_1;\Z)\to H_3(W;\Z)$ are both isomorphisms. Furthermore $(\iota_0)_*([Y_0])=(\iota_1)_*([Y_1])\in H_3(W;\Z)$. 
\end{proposition}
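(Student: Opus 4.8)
The plan is to verify the three claims in turn, all of which follow from the long exact sequence of the pair $(W, Y_i)$ together with Poincaré–Lefschetz duality. First I would note that since $W$ is a $\Q$-homology cobordism, $H_*(W, Y_i; \Q) = 0$ for all $*$, hence $H_*(W, Y_i; \Z)$ is torsion in every degree (this uses the universal coefficient theorem and finite generation of these groups, $W$ being a compact manifold with boundary). Now $W$ is a compact oriented $4$-manifold with $\partial W = Y_1 \sqcup (-Y_0)$, so Poincaré–Lefschetz duality gives $H_k(W, \partial W; \Z) \cong H^{4-k}(W; \Z)$ and $H_k(W; \Z) \cong H^{4-k}(W, \partial W; \Z)$. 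In particular $H_4(W, \partial W; \Z) \cong H^0(W; \Z) \cong \Z$, generated by the relative fundamental class $[W, \partial W]$, whose image under the connecting homomorphism $\partial \colon H_4(W, \partial W; \Z) \to H_3(\partial W; \Z) = H_3(Y_1; \Z) \oplus H_3(Y_0; \Z)$ is $[Y_1] \oplus (-[Y_0])$ (up to the standard sign conventions for the induced orientation on the boundary).

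For the isomorphism statement, consider the exact sequence of the pair $(W, Y_0)$ in degree $3$:
\[
H_4(W, Y_0; \Z) \to H_3(Y_0; \Z) \xrightarrow{(\iota_0)_*} H_3(W; \Z) \to H_3(W, Y_0; \Z).
\]
The right-hand group $H_3(W, Y_0; \Z)$ is torsion, while by the excision/duality observation above one also identifies $H_4(W, Y_0; \Z)$: using $Y_0$ as one boundary component, excising a collar of $Y_1$, one gets $H_4(W, Y_0; \Z) \cong H_4(W, \partial W; \Z) \cong \Z$ if $Y_1 \neq \emptyset$ — but more cleanly, $H_3(W; \Z)$ is a finitely generated abelian group whose rank equals $\mathrm{rk}\, H_3(Y_0; \Q) = 1$ (since rational homology agrees), so $(\iota_0)_*$ is a rational isomorphism of $\Z$'s and it remains to rule out a nontrivial kernel and cokernel. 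Injectivity: $H_3(Y_0; \Z) \cong \Z$ is torsion-free, so the kernel, being a quotient of the torsion group $H_4(W, Y_0; \Z)$... — here I would instead argue that $H_3(Y_0;\Z) \to H_3(W;\Z)$ sends the generator $[Y_0]$ to a class which is non-torsion because its image in $H_3(W, Y_1; \Z) \cong H_3(W; \Z)/(\text{torsion-ish})$ can be detected by pairing with $H^3$; alternatively, and most efficiently, I would invoke that $(\iota_0)_*$ and $(\iota_1)_*$ are rationally isomorphisms between copies of $\Z$, and that $H_3$ of a closed oriented $3$-manifold is $\Z$ with no torsion, so an injection is forced once the cokernel is shown torsion-free — and the cokernel $\mathrm{coker}(\iota_0)_* \hookrightarrow H_3(W, Y_0;\Z)$ is torsion, while $H_3(W;\Z) \cong H^1(W, \partial W;\Z)$ is torsion-free by duality (as $H_1$ of anything is torsion-free after dualizing — precisely, $H^1$ is always torsion-free). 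Hence $(\iota_0)_*$ is an injection of $\Z$ into a torsion-free group with torsion cokernel, forcing it to be an isomorphism; the same argument applies to $(\iota_1)_*$.

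Finally, for $(\iota_0)_*([Y_0]) = (\iota_1)_*([Y_1])$: apply $\partial \colon H_4(W, \partial W; \Z) \to H_3(\partial W; \Z)$ to $[W, \partial W]$, landing on $[Y_1] - [Y_0] \in H_3(Y_1;\Z) \oplus H_3(Y_0;\Z)$ up to sign, and then push forward to $H_3(W;\Z)$; since this class is a boundary in $W$ it maps to $0$, giving $(\iota_1)_*([Y_1]) = (\iota_0)_*([Y_0])$ after fixing orientations consistently with $\partial W = Y_1 \sqcup (-Y_0)$. The main obstacle I anticipate is purely bookkeeping: getting the orientation and sign conventions right in the boundary map so that the minus sign on $-Y_0$ cancels correctly, and being careful that the "torsion-free cokernel" argument is airtight — i.e. that $H_3(W;\Z)$ really is torsion-free, which is exactly what Poincaré–Lefschetz duality ($H_3(W;\Z) \cong H^1(W, \partial W;\Z)$, and first cohomology is always torsion-free) delivers. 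No deep input is needed beyond duality and the long exact sequence of a pair.
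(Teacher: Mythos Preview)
Your overall strategy --- long exact sequence of the pair plus Poincar\'e--Lefschetz duality --- is exactly the paper's approach, and your argument for the equality $(\iota_0)_*([Y_0])=(\iota_1)_*([Y_1])$ via $\partial[W,\partial W]$ is essentially identical to theirs. Injectivity is also fine: the kernel is finite (rational isomorphism) and sits inside $H_3(Y_0;\Z)\cong\Z$, hence vanishes.

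There is, however, a genuine gap in your surjectivity argument. You correctly observe that the cokernel of $(\iota_0)_*$ embeds in $H_3(W,Y_0;\Z)$ and that this group is torsion (from the rational hypothesis), and separately that $H_3(W;\Z)\cong H^1(W,\partial W;\Z)$ is torsion-free. But an injection of $\Z$ into a torsion-free group with torsion cokernel need \emph{not} be an isomorphism: multiplication by $n$ on $\Z$ is precisely such a map. Knowing the target is torsion-free buys you nothing here. What is actually needed is that $H_3(W,Y_0;\Z)$ itself is torsion-free --- hence zero, being also finite --- so that the cokernel vanishes. The paper obtains this via the form of Lefschetz duality that pairs the two boundary components against each other:
\[
H_3(W,Y_0;\Z)\;\cong\; H^1(W,Y_1;\Z),
\]
and the right-hand side is torsion-free by the universal coefficient theorem (the $\operatorname{Ext}$ term vanishes since $H_0(W,Y_1;\Z)=0$). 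The same duality gives $H_4(W,Y_0;\Z)\cong H^0(W,Y_1;\Z)=0$, which incidentally handles injectivity in one stroke as well. Your closing worry that the ``torsion-free cokernel'' step needs to be airtight was well placed, but you were watching the wrong group: it is $H_3(W,Y_0;\Z)$, not $H_3(W;\Z)$, whose torsion-freeness does the work.
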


\begin{proof}
 Let $i\in \{0,1\}$. We have the following commutative diagram:
\[ \xymatrix@C0.5cm@R0.65cm{ \dots \ar[r]&H_4(W,Y_i;\Z)\ar[r] & H_3(Y_i;\Z)\ar[rr]^{(\iota_i)_*}\ar[d]&&
H_3(W;\Z)\ar[r]\ar[d] & H_3(W,Y_i;\Z)\ar[r]  &\dots \\&
 & H_3(Y_i;\Q)\ar[rr]^{(\iota_i)_*}_\cong&&
H_3(W;\Q).}\]
Since $W$ is a $\Q$-homology cobordism we know that the bottom horizontal map is an isomorphism. It follows from this observation and the universal coefficient theorem 
that the kernel and the cokernel of the map $(\iota_i)_*\colon H_3(Y_i;\Z)\to H_3(W;\Z)$ are finite.
But by Poincar\'e-Lefschetz duality and the universal coefficient theorem we see that
\begin{equation*}\begin{split}
H_3(W,Y_i;\Z)\cong H^1(W,Y_{1-i};\Z)\cong & \operatorname{Hom}(H_1(W,Y_{1-i};\Z),\Z)\\ & \oplus \operatorname{Ext}(H_0(W,Y_{1-i};\Z),\Z)
\end{split}
\end{equation*}
 is torsion-free. Similarly, we see that $H_4(W,Y_i;\Z)$ is torsion free, since  
 \begin{equation*}\begin{split}
H_4(W,Y_i;\Z)\cong H^0(W,Y_{1-i};\Z)\cong & \operatorname{Hom}(H_0(W,Y_{1-i};\Z),\Z).
\end{split}
\end{equation*}
In summary, we see that the two maps  $(\iota_{0})_*\colon H_3(Y_0;\Z)\to H_3(W;\Z)$ and $(\iota_{1})_*\colon H_3(Y_1;\Z)\to H_3(W;\Z)$ are isomorphisms.

It remains to show that  $(\iota_0)_*([Y_0])=(\iota_1)_*([Y_1])\in H_3(W;\Z)$. 
We consider the long exact sequence of the pair $(W,Y_0\cup Y_1)$. 
\[ \dots \,\to \,H_4(W,Y_0\cup Y_1;\Z)\,\xrightarrow{\, \partial \,} H_3(Y_0\cup Y_1;\Z) \, \to\, H_3(W;\Z)\,\to\,\dots \]
It is well-known that $\partial([W])=[\partial W]$. Since $W$ is a cobordism we have $\partial W=Y_1\sqcup(-Y_0)$, in particular $[\partial W]=[Y_1]-[Y_0]$.
Since the sequence is exact we see that the image of $[Y_1]-[Y_0]$ is zero in $H_3(W;\Z)$, i.e.\ we have $(\iota_0)_*([Y_0])-(\iota_1)_*([Y_1])=0\in H_3(W;\Z)$. 
\end{proof}

\begin{lemma}\label{lemma: commutative diagram}
	Suppose that we have a continuous map $g\colon X \to Z$, between path-connected topological spaces $X$ and $Z$ which admit the structure of a CW-complex. Then for any $k$ the following diagram commutes:
\begin{equation*}
\begin{tikzcd}
	H_k(X;\Z) \arrow{d}[swap]{(j_X)_*} \arrow{r}{g_*} & H_k(Z;\Z)  \arrow{d}{(j_Z)_*}  \\
	H_k(\pi_1(X);\Z) \arrow{r}{g_*} & H_k(\pi_1(Z);\Z).
\end{tikzcd} 
\end{equation*}
Here the group homology $H_i(G;\Z)$ of a group $G$ is defined to be the singular homology of its associated $K(G,1)$-space: $H_i(G;\Z):= H_i(K(G,1);\Z)$. 
Furthermore  $j_X\colon X \to K(\pi_1(X),1)$ and $j_Z\colon Z \to K(\pi_1(Z),1)$ are the natural maps of the respective spaces to the Eilenberg-MacLane space associated to their fundamental groups. 
\end{lemma}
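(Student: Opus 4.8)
The plan is to reduce the commutativity of the square to the single statement that two particular maps from $X$ into $K(\pi_1(Z),1)$ are homotopic, and then invoke that homotopic maps induce the same map on singular homology.

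First I would pin down the bottom arrow. Writing $g_* \colon \pi_1(X) \to \pi_1(Z)$ for the homomorphism induced by $g$, the lower horizontal map in the diagram is by definition $H_k$ applied to this homomorphism; concretely it is $(Bg)_*$, where $Bg \colon K(\pi_1(X),1) \to K(\pi_1(Z),1)$ is any cellular (basepoint-preserving) map realizing $g_*$ on fundamental groups. Such a $Bg$ exists and is unique up to homotopy. So the square we must show commutes is
\begin{equation*}
(j_Z)_* \circ g_* \;=\; (Bg)_* \circ (j_X)_* \colon \quad H_k(X;\Z) \longrightarrow H_k(K(\pi_1(Z),1);\Z).
\end{equation*}

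The key step is the claim that the two maps $j_Z \circ g$ and $Bg \circ j_X$, both going from $X$ to $K(\pi_1(Z),1)$, are homotopic. Each of them induces the homomorphism $g_*$ on $\pi_1$: for $j_Z \circ g$ this is $(j_Z)_* \circ g_* = g_*$, since $(j_Z)_*$ is the canonical identification $\pi_1(Z) \xrightarrow{\ \cong\ } \pi_1(K(\pi_1(Z),1))$; for $Bg \circ j_X$ it is $(Bg)_* \circ (j_X)_* = g_* \circ \id = g_*$. Two maps from a CW complex into an aspherical space that induce the same homomorphism on $\pi_1$ (for suitably chosen basepoints) are homotopic: this is the standard consequence of obstruction theory, since the obstructions to extending a partial homotopy over the $(n+1)$-skeleton lie in cohomology groups of $X$ with coefficients in $\pi_n(K(\pi_1(Z),1))$, which vanish for $n \ge 2$ as higher homotopy groups of a $K(\pi,1)$ are trivial, while the $n=1$ level is exactly the hypothesis that the induced $\pi_1$-homomorphisms agree. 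Hence $j_Z \circ g \simeq Bg \circ j_X$, and applying $H_k(-;\Z)$ together with functoriality gives the displayed identity, i.e.\ the commutativity of the square.

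The point requiring a little care, and the main (mild) obstacle, is basepoint bookkeeping: $\pi_1$ is defined relative to a basepoint, and a priori the two maps above might only induce conjugate homomorphisms rather than literally equal ones. One resolves this either by fixing basepoints and basepoint-preserving maps throughout (choosing $Bg$ basepoint-preserving, which is possible), or by observing that conjugate homomorphisms $\pi_1(X)\to\pi_1(Z)$ induce the same map on group homology (inner automorphisms act trivially on $H_*$) and that free homotopies already suffice for homological conclusions. Either way this is routine; the genuine content is the obstruction-theoretic uniqueness, up to homotopy, of a map into an aspherical space with prescribed effect on $\pi_1$.
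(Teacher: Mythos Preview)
Your argument is correct and essentially identical to the paper's: both compositions $j_Z\circ g$ and $Bg\circ j_X$ induce the same homomorphism on $\pi_1$, so asphericity of the target forces them to be homotopic, whence equal on homology. The paper phrases the homotopy step by citing Whitehead's theorem rather than spelling out the obstruction theory, and omits your basepoint discussion, but the content is the same.
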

\begin{proof}
	We denote by $K(g)\colon K(\pi_1(X),1) \to K(\pi_1(Z),1)$ the map induced by the group homomorphism $g_* \colon \pi_1(X) \to \pi_1(Z)$. By construction this map induces the map 
	\begin{equation*}
g_* \colon \pi_1(X) \xrightarrow{\cong} \pi_1(K(\pi_1(X),1)) \to \pi_1(K(\pi_1(Z)),1) \xleftarrow{\cong} \pi_1(Z). 
	\end{equation*}
Therefore, the two maps $K(g) \circ j_X$ and $j_Z \circ g$ induce the same maps on fundamental group. Since their image space $K(\pi_1(Z),1)$ is aspherical, these two maps are homotopic by Whitehead's theorem, and hence induce the same maps in homology. 
\end{proof}

\begin{theorem}\label{thm:iso-on-pi1-implies-homeo}
Let $Y_0$ and $Y_1$ be irreducible, closed, orientable  3-manifolds and let
$\alpha\colon \pi_1(Y_0)\to \pi_1(Y_1)$ be an isomorphism.
\begin{enumerate}
\item  If $Y_0$ and $Y_1$ are not lens spaces, then $Y_0$ and $Y_1$  are homeomorphic.
\item If  $Y_0$ and $Y_1$ are not  spherical, then there exists a homeomorphism $g\colon Y_0\to Y_1$ which
induces $\alpha$, i.e.\ which satisfies $g_*=\alpha\colon \pi_1(Y_0)\to \pi_1(Y_1)$. 
\end{enumerate}
\end{theorem}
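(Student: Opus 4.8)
The plan is to split the argument according to whether the fundamental group — which $\alpha$ identifies for the two manifolds — is finite or infinite, and to invoke Geometrization in each case to pin down the geometry of $Y_0$ and $Y_1$.

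\textbf{The infinite case.} First I would show that $Y_0$ and $Y_1$ are aspherical. Since each $Y_i$ is irreducible, the Sphere Theorem forces $\pi_2(Y_i)=0$, so the universal cover $\widetilde{Y_i}$ is a simply connected open $3$-manifold with $\pi_2=0$; by Hurewicz together with the vanishing of $H_j(\widetilde{Y_i};\Z)$ for $j\geq 3$ (non-compactness, resp.\ dimension), $\widetilde{Y_i}$ has trivial reduced homology and is therefore contractible. Thus $Y_0$ and $Y_1$ are $K(\pi,1)$'s, and since maps into an aspherical space are controlled by $\pi_1$, there is a based map $f\colon Y_0\to Y_1$ with $f_*=\alpha$; as $\alpha$ is an isomorphism and all higher homotopy groups vanish, $f$ is a homotopy equivalence. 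Now I would invoke the topological rigidity of closed aspherical $3$-manifolds to conclude that $f$ is homotopic to a homeomorphism $g$, whence $g_*=\alpha$ (up to an inner automorphism of $\pi_1(Y_0)$, which is realized by a self-homeomorphism of $Y_0$). This settles (2), and also (1) in this case, since aspherical $3$-manifolds are neither lens spaces nor spherical.

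\textbf{The finite case.} Here I would apply the Elliptization Theorem (Perelman): an irreducible closed orientable $3$-manifold with finite fundamental group is a spherical space form. Hence $Y_0$ and $Y_1$ are both spherical, so the hypothesis of (2) is vacuous and there is nothing to prove there; for (1) the hypothesis is precisely that neither $Y_i$ is a lens space. I would then cite the classification of spherical space forms, according to which the only closed spherical $3$-manifolds \emph{not} determined up to homeomorphism by their fundamental group are the lens spaces. Since $\pi_1(Y_0)\cong\pi_1(Y_1)$, this yields a homeomorphism $Y_0\cong Y_1$.

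The Sphere Theorem/Hurewicz step and the obstruction-theoretic construction of $f$ are routine; the real content is imported wholesale. The step I expect to need the most care in writing up is the invocation of topological rigidity in \emph{full} generality: it is the combination of Waldhausen's theorem for Haken manifolds, Mostow--Prasad rigidity in the hyperbolic case, and Scott's theorem for the remaining non-Haken small Seifert fibered pieces, with Geometrization guaranteeing that every closed aspherical $3$-manifold is of one of these types. A close second obstacle is extracting the precise form of the spherical classification from the literature, namely that lens spaces are the \emph{only} exceptions to rigidity among spherical space forms.
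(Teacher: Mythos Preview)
Your proposal is correct and follows essentially the same route as the paper: the paper's proof is just a citation to \cite[Theorem~2.1.2]{AFW}, noting that the result is a consequence of the Geometrization Theorem, Mostow Rigidity, Waldhausen's work on Haken manifolds, Scott's theorem on Seifert fibered spaces, and the classical classification of spherical 3-manifolds. You have correctly identified and unpacked exactly these ingredients, splitting into the aspherical (infinite $\pi_1$) and spherical (finite $\pi_1$) cases and invoking the appropriate rigidity statement in each.
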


\begin{proof}
The theorem is stated as \cite[Theorem~2.1.2]{AFW}.
It is a consequence of  the Geometrization Theorem, the
Mostow Rigidity Theorem, work of Waldhausen \cite[Corollary 6.5]{Waldhausen68}, Scott
\cite[Theorem 3.1]{Scott83} and classical work on spherical 3-manifolds (see \cite[p. 113]{Orlik72}).
\end{proof}

We conclude our section of preparations with the following theorem that was recently proved by Huber \cite{Huber}.

\begin{theorem}[Huber]\label{thm:huber}
Let $L(p_1,q_1)$ and $L(p_2,q_2)$ be lens spaces. If $L(p_1,q_1)\leq L(p_2,q_2)$, then one of the following holds:
\begin{enumerate}
\item the lens spaces are homeomorphic,
\item there exists an $n\geq 2$ with $L(p_1,q_1)\cong L(n,1)$ and $p_2/q_2\in \mathcal{F}_n$, where 
\[ \mathcal{F}_n\,:=\, \big\{ \tmfrac{nm^2}{nmk+1}\,\big|\, m>k>0, \operatorname{gcd}(m,k)=1\}.\]
\item $L(p_1,q_1)\cong S^3$.
\end{enumerate}
\end{theorem}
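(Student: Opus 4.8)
The plan is to combine a fundamental-group observation with Donaldson's diagonalization theorem --- applied to a closed four-manifold obtained by capping off the cobordism --- and then to feed the outcome into Lisca's classification of embeddings of linear lattices into the diagonal lattice. So suppose $L(p_1,q_1)\le L(p_2,q_2)$, i.e.\ there is a ribbon $\Q$-homology cobordism $W$ from $Y_0:=L(p_1,q_1)$ to $Y_1:=L(p_2,q_2)$. By Proposition~\ref{prop:basics-ribbon-cobordism} the inclusions induce an injection $(\iota_0)_*\colon\Z/p_1=\pi_1(Y_0)\hookrightarrow\pi_1(W)$ and a surjection $(\iota_1)_*\colon\Z/p_2=\pi_1(Y_1)\twoheadrightarrow\pi_1(W)$, so $\pi_1(W)$ is cyclic of some order $d$ with $p_1\mid d\mid p_2$; in particular $p_1\mid p_2$. (If $p_1=1$ we are already in alternative (3).)

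Next I would cap off $W$ at both ends by the standard negative-definite linear plumbings. For each lens space, $-L(a,b)$ bounds a negative-definite linear plumbing $N(a,b)$ whose intersection lattice $\Lambda(a,b)$ has rank equal to the length of the negative continued-fraction expansion of $a/b$ and discriminant of absolute value $a$. Since $-Y_0=L(p_1,p_1-q_1)$ and $-Y_1=L(p_2,p_2-q_2)$, the manifold $Z:=N(p_1,p_1-q_1)\cup_{Y_0}W\cup_{Y_1}N(p_2,q_2)$ is closed and oriented. As $W$ is a $\Q$-homology cobordism between rational homology spheres, $b_1(W)=b_2(W)=0$, so $W$ makes no contribution to the rational homology of $Z$: a Mayer--Vietoris computation (the gluing hypersurfaces being rational homology spheres) shows that $Z$ is negative definite with intersection lattice $\Lambda(p_1,p_1-q_1)\oplus\Lambda(p_2,q_2)$ of rank $n$. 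Moreover van Kampen gives $\pi_1(Z)=1$ --- this is precisely where the surjectivity of $(\iota_1)_*$ is used --- so $H_2(Z;\Z)$ is torsion-free.

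Donaldson's diagonalization theorem now forces $(H_2(Z;\Z),Q_Z)\cong\langle-1\rangle^{\oplus n}$, and since each plumbing meets $W$ along a rational homology sphere, the two summands embed as mutually orthogonal, full-rank sublattices. Hence $\Lambda(p_1,p_1-q_1)\oplus\Lambda(p_2,q_2)$ embeds into the diagonal lattice of rank $n$; comparing discriminants one moreover finds that $p_1p_2$ is a perfect square, so $p_2=p_1f^2$ for some $f\ge 1$. These are exactly the lattice embeddings classified by Lisca in his work on connected sums of lens spaces bounding rational homology balls. Imposing the divisibility $p_1\mid p_2$ on his list, one checks that the only surviving possibilities are: $L(p_2,q_2)$ is homeomorphic to $-L(p_1,p_1-q_1)=L(p_1,q_1)$ --- which, after invoking the classification of lens spaces up to homeomorphism, is alternative (1); $L(p_1,q_1)\cong L(n,1)$ for some $n\ge 2$ with $p_2/q_2\in\mathcal F_n$, alternative (2); or $p_1=1$, alternative (3).

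I expect the last step to be the only substantial one: extracting exactly this trichotomy --- and in particular identifying the precise family $\mathcal F_n$ --- from Lisca's rather intricate classification of the admissible lattice embeddings, and checking that the divisibility coming from the ribbon hypothesis is precisely what eliminates every other entry on his list. The preceding steps are routine, the only further ingredient being the classical homeomorphism classification of lens spaces, used to promote the lattice isometry of alternative (1) to an actual homeomorphism.
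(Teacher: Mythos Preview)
The paper does not prove this theorem; it is quoted from Huber's work \cite{Huber} as a black-box input to the proof of Theorem~\ref{thm: main theorem}, so there is no proof in the paper to compare against.

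That said, your sketch is essentially the strategy Huber employs. The ribbon hypothesis yields the divisibility $p_1\mid p_2$ via Proposition~\ref{prop:basics-ribbon-cobordism}; capping with the simply connected negative-definite linear plumbings and applying Donaldson's theorem produces a full-rank orthogonal embedding of $\Lambda(p_1,p_1-q_1)\oplus\Lambda(p_2,q_2)$ into the standard negative diagonal lattice; and then Lisca's combinatorial classification of such embeddings, filtered through the divisibility constraint, is what isolates the three alternatives. Your orientation bookkeeping and the van Kampen step are correct, and your observation that $p_1p_2$ is a perfect square (hence $p_2=p_1f^2$) follows exactly as you say from comparing discriminants. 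You are also right that the real content is the final extraction from Lisca's list: this is a genuinely delicate case analysis that occupies most of Huber's paper, and your proposal does not attempt it --- so as written this is an outline rather than a proof, but an accurate outline of the actual argument.
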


\section{Proof of Theorem \ref{thm: main theorem-2}}
We first provide the proof of Theorem \ref{thm: main theorem-2}, since the statement of 
Theorem \ref{thm: main theorem-2} gives us also most of 
Theorem \ref{thm: main theorem}.

\begin{proof}[Proof of Theorem \ref{thm: main theorem-2}]
Let $Y_0$ and $Y_1$ be  aspherical, closed, connected, oriented 3-manifolds. We assume that $Y_0\geq Y_1$ and that $Y_1\geq Y_0$. We need to show that there exists an orientation-preserving homeomorphism from $Y_0$ to $Y_1$.

Let $W$ be a ribbon $\Q$-homology cobordism from $Y_0$ to $Y_1$.
We denote by $\iota_0\colon Y_0\to W$ and $\iota_1\colon Y_1\to W$ the obvious inclusion maps.  Since we also assume that $Y_1\geq Y_0$  we obtain from Corollary~\ref{cor:map-on-pi1}  that $(\iota_{1})_*\colon \pi_1(Y_1)\to \pi_1(W)$ is an isomorphism. Furthermore by Proposition~\ref{prop:basics-ribbon-cobordism} (2) we know that $(\iota_{0})_*\colon \pi_1(Y_0)\to \pi_1(W)$ is a monomorphism.
We write $\alpha:=((\iota_{1})_*)^{-1}\circ (\iota_{0})_*\colon \pi_1(Y_0) \to \pi_1(Y_1)$.

Since $Y_1$ is aspherical  there exists a map $f\colon Y_0\to Y_1$ with $f_*=\alpha\colon \pi_1(Y_0)\to \pi_1(Y_1)$. 
We will show that this  map $f$ has degree equal to $\pm 1$. For this we consider the following diagram, which is commutative by Lemma \ref{lemma: commutative diagram}:
\begin{equation}\label{diag: commutative diagram}
\begin{tikzcd}
	H_3(Y_0;\Z) \arrow{d}[swap]{(j_0)_*}{\cong} \arrow{r}{(\iota_0)_*}[swap]{\cong} & H_3(W;\Z)  \arrow{d}{(j_W)_*} & H_3(Y_1;\Z) \arrow{l}{\cong}[swap]{(\iota_1)_*}  \arrow{d}{(j_1)_*}[swap]{\cong} \\
	H_3(\pi_1(Y_0);\Z) \arrow{r}{(\iota_0)_*} & H_3(\pi_1(W);\Z) & H_3(\pi_1(Y_1);\Z). \arrow{l}[swap]{(\iota_1)_*}{\cong} 
\end{tikzcd} 
\end{equation}
Here, $j_0 \colon Y_0 \hookrightarrow K(\pi_1(Y_0),1)$ is the natural map of $Y_0$ to the Eilenberg-MacLane space of its fundamental group, and likewise $j_1$ and $j_W$ are defined.

It follows from our hypothesis that $W$ is a $\Q$-homology cobordism
and Proposition~\ref{prop:basics-ribbon-cobordism-h} that the  horizontal maps in the first line of this diagram are isomorphisms.
By hypothesis  $Y_0$ and $Y_1$ are already aspherical, i.e.\ they are Eilenberg-MacLane spaces. Therefore, the maps $(j_0)_*$ and $(j_1)_*$ are isomorphisms. 

Now by commutativity of the right square in the diagram (\ref{diag: commutative diagram}), the vertical map $(j_W)_*$ induced by the inclusion map $j_W$ must be an isomorphism. Therefore, by commutativity of the left square in this diagram, we conclude that the map $(\iota_0)_* \colon H_3(\pi_1(Y_0);\Z) \to H_3(\pi_1(W);\Z)$ is an isomorphism. 

As in the proof of Lemma \ref{lemma: commutative diagram} above, we denote by $K(\varphi)\colon K(G,1) \to K(H,1)$ the map induced on Eilenberg-MacLane spaces by a group homomorphism $\varphi\colon G \to H$. In our situation, we have the two maps $f\colon Y_0 \to Y_1$ and $K((\iota_1)_*^{-1}) \circ K((\iota_0)_*)\colon Y_0 \to Y_1$, where we identify $Y_0$ and $Y_1$ with Eilenberg-MacLane spaces of their respective fundamental group. Both induce the same map at the level of fundamental groups. Since $Y_1$ is aspherical, these maps are homotopic by Whitehead's theorem. Therefore, they induce the same map on homology. By the above observation that the bottom maps in diagram (\ref{diag: commutative diagram}) are isomorphisms, we conclude that $f_* \colon H_3(Y_0) \to H_3(Y_1)$ is an isomorphism, and therefore $f$ has degree $\pm 1$. 

By a standard argument a map $f\colon Y_0\to Y_1$ of degree $\pm 1$ induces an epimorphism of fundamental groups. Since we already know that $f_*=\alpha$ is a monomorphism we see that $f_*\colon \pi_1(Y_0)\to \pi_1(Y_1)$ is an isomorphism. Thus it follows from 
Theorem~\ref{thm:iso-on-pi1-implies-homeo} (2) that there exists a homeomorphism $g\colon Y_0\to Y_1$ that induces $f_*=\alpha$. It remains to show that $g$ is orientation-preserving. To do so we consider the  commutative diagram:
\begin{equation}\label{diag: commutative diagram-2}
\begin{tikzcd}
	H_3(Y_0;\Z) \arrow{d}[swap]{(j_0)_*}{\cong} \arrow{r}{(\iota_0)_*}[swap]{\cong} & H_3(W;\Z)  \arrow{d}{(j_W)_*} & H_3(Y_1;\Z) \arrow{l}{\cong}[swap]{(\iota_1)_*}  \arrow{d}{(j_1)_*}[swap]{\cong} \\
	H_3(\pi_1(Y_0);\Z) \arrow{r}{(\iota_0)_*}
 \arrow{d}[swap]{\operatorname{id}}
 & H_3(\pi_1(W);\Z) & H_3(\pi_1(Y_1);\Z) \arrow{l}[swap]{(\iota_1)_*}{\cong}
 \arrow{d}[swap]{\operatorname{id}}
 \\
	H_3(\pi_1(Y_0);\Z) \arrow{rr}{\alpha=f_*=g_*} & & H_3(\pi_1(Y_1);\Z). \\
	H_3(Y_0;\Z) \arrow{u}[swap]{(j_0)_*}{\cong} \arrow{rr}{g_*}[swap]{\cong} &  & H_3(Y_1;\Z). \arrow{u}{(j_1)_*}[swap]{\cong} 
\end{tikzcd} 
\end{equation}
We already know that the top part of the diagram (\ref{diag: commutative diagram-2}) commutes. The center part of diagram (\ref{diag: commutative diagram-2}) commutes since $\alpha=((\iota_{1})_*)^{-1}\circ (\iota_{0})_*$. The bottom part of diagram (\ref{diag: commutative diagram-2}) commutes again 
by Lemma~\ref{lemma: commutative diagram}. Finally by design we have $\alpha=f_*=g_*$. 

By  Proposition~\ref{prop:basics-ribbon-cobordism-h}  we know that  $(\iota_0)_*([Y_0])=(\iota_1)_*([Y_1])\in H_3(W;\Z)$.  But by the above this implies that 
$g_*([Y_0])=[Y_1]$. 
\end{proof}

\section{Proof of Theorem \ref{thm: main theorem}}

\begin{proof}[Proof of Theorem \ref{thm: main theorem}]
Let $Y_0$ and $Y_1$ be irreducible closed connected oriented 3-manifolds.
We assume that $Y_0\geq Y_1$ and that $Y_1\geq Y_0$. We need to show that 
$Y_0$ and $Y_1$ are homeomorphic.
If $\pi_1(Y_0)$ and $\pi_1(Y_1)$ are infinite, it follows by a standard argument, using the Sphere Theorem, that $Y_1$ is aspherical, see \cite[p.~48]{AFW}.
Thus we see that the statement follows from Theorem \ref{thm: main theorem-2}.

Therefore it suffices to consider the case that  $\pi_1(Y_0)$ or $\pi_1(Y_1)$ is finite. We assume that $Y_0\geq Y_1$ and that $Y_1\geq Y_0$. We need to show that 
$Y_0$ and $Y_1$ are homeomorphic.  By symmetry we can assume that $\pi_1(Y_1)$ is finite. 

Let $W_0$ be a ribbon $\Q$-homology cobordism from $Y_0$ to $Y_1$
and let $W_1$ be a ribbon $\Q$-homology cobordism from $Y_1$ to $Y_0$.
By Corollary~\ref{cor:map-on-pi1} we know that the inclusion induced maps
 $ \pi_1(Y_1)\to \pi_1(W_0)$ and
 $ \pi_1(Y_0)\to \pi_1(W_1)$  are isomorphisms 
and 
by Proposition~\ref{prop:basics-ribbon-cobordism}
we know that the inclusion induced maps  $ \pi_1(Y_0)\to \pi_1(W_0)$ 
and  $\pi_1(Y_1)\to \pi_1(W_1)$ are monomorphisms.

It follows that $|\pi_1(Y_1)|\leq |\pi_1(W_1)|= |\pi_1(Y_0)|\leq |\pi_1(W_0)|=|\pi_1(Y_1)|$.
Since $\pi_1(Y_1)$ is finite we see that we have equalities throughout and we see that all the inclusion induced maps are isomorphisms. In particular we see that $\pi_1(Y_0)\cong \pi_1(Y_1)$.

First we consider the case that $\pi_1(Y_0)$, and thus also $\pi_1(Y_1)$, is not cyclic. 
In this setting it follows from 
Theorem~\ref{thm:iso-on-pi1-implies-homeo} that $Y_0$ is homeomorphic to $Y_1$.

Finally we consider the case that $\pi_1(Y_0)$, and thus also $\pi_1(Y_1)$, is  cyclic. 
 It follows from \cite[p.~25]{AFW} that both $Y_0$ and $Y_1$ are lens spaces. 
But it follows almost immediately from Theorem~\ref{thm:huber}
that if for two lens spaces $Y_0$ and $Y_1$ we have $Y_1\geq Y_0$ and if they have isomorphic fundamental groups, then $Y_0$ and $Y_1$ are homeomorphic. 
\end{proof}

\section{Sketch of proof of Theorem \ref{thm: ribbon to itself}}
In this section we provide a sketch of proof of Theorem \ref{thm: ribbon to itself} and  Corollary \ref{cor:map-on-pi1}, both of which are essentially due to Agol \cite{Agol}, although he has formulated it in the context of knot complements. 

\begin{proof}[Sketch of proof of Theorem \ref{thm: ribbon to itself}]
	This follows almost verbatim in the same way as in all but the last paragraph of \cite[Proof of Theorem 1.2]{Agol}. The only comment to make is that his proof uses residual finiteness of fundamental groups of knot complements. This is due to Hempel \cite{Hempel}, using Thurston's proof of his geometrization conjecture for Haken manifolds. In our situation, we need the fact that all fundamental groups of 3-manifolds are residually finite, and this uses the full geometrization conjecture together with Hempel's result. We will outline this proof for the sake of completeness. 
	
		Suppose that $W$ is a rational ribbon homology cobordism from $Y_-$ to $Y_+$, where both $Y_+$ and $Y_-$ are homeomorphic to $Y$. For a finitely presented group $\pi$, Agol considers the representation variety $R_N(\pi) = \Hom(\pi,SO(N))$, for some $N\geq 1$, and in the case of a path-connected topological space $X$, he defines $R_N(X):= R_N(\pi_1(X))$. This representation variety is a real algebraic set. Since the inclusion $\iota_+\colon Y_+ \to W$ defines a surjection at the level of fundamental groups, we obtain an injection $\iota_+^*\colon R_N(W) \to R_N(Y_+)$ by precomposition of representations with $(\iota_+)_*\colon \pi_1(Y_+) \to \pi_1(W)$. In fact, since there is a presentation of $\pi_1(W)$ obtained by one from $\pi_1(Y_+)$ by only possibly adding relations, but no generators, one can realize $R_N(W)$ as an algebraic subset of $R_N(Y_+)$, $R_N(W)  \subseteq R_N(Y_+) $. 
		
		On the other hand, the inclusion $\iota_-\colon Y_- \to W$ induces an injection of fundamental groups, and Daemi, Lidman, Vela-Vick, and Wong have shown that the induced map $(\iota_-)^*\colon R_N(W) \to R_N(Y_-)$ is surjective, see \cite[Proposition 2.1]{DLVW}. Both of these results build on work of Gerstenhaber and Rothaus, the first statement uses \cite[Theorem 2]{Gerstenhaber-Rothaus_1962}, using the residual finiteness of $\pi_1(Y_-)$, and the second statement builds on \cite[Theorem 1]{Gerstenhaber-Rothaus_1962}, where it is essential that the Lie group that Agol considers, the group $SO(N)$, is compact. 
		
		At this stage, $R_N(Y_-)$ and $R_N(Y_+)$ have a priori been considered using different presentations, but a sequence of Tietze moves between the presentations induces a polynomial isomorphism $\phi\colon R_N(Y_-) \to R_N(Y_+)$ between these. Together with the statement in the last paragraph, one obtains a surjective polynomial map $\phi \circ (\iota_-)^*\colon R_N(W) \to R_N(Y_+)$. At this stage Agol uses the following algebraic geometric lemma (\cite[Lemma A.2]{Agol}): If $X$ and $Z$ are real algebraic sets, with $X \subseteq Z$, and if there is a surjective polynomial map $\varphi\colon X \to Z$, then $X=Z$. Applied to our problem, this implies that $R_N(Y_+) = R_N(W)$, induced by the inclusion $(\iota_+)^*\colon R_N(W) \to R_N(Y_+)$. Finally, by using residual finiteness of $\pi_1(Y_+)$ again, and by the fact that any finite group embeds into some $SO(N)$ for sufficiently large $N$, we conclude as in Agol's situation that $(\iota_+)_*\colon \pi_1(Y_+) \to \pi_1(W)$ is an isomorphism, using $R_N(Y_+) = R_N(W)$.  
		\end{proof}

\begin{proof}[Proof of Corollary \ref{cor:map-on-pi1}]
	 Suppose that $W_0$ is a ribbon $\Q$-homology cobordism from $Y_0$ to $Y_1$, and that $W_1$ is a ribbon $\Q$-homology cobordism from $Y_1$ to $Y_0$. We denote by $\iota_{10}\colon Y_1 \to W_0$ the natural inclusion. We form a $\Q$-homology ribbon cobordism from $Y_-:= Y_1$ to $Y_+:=Y_1$ by gluing $W_0$ and $W_1$ along $Y_0$, and we denote by $\iota_+\colon Y_+ \to W$ the natural inclusion. Finally, we denote by $j\colon W_0 \to W$ the natural inclusion. Then clearly $\iota_+ = j \circ \iota_{10}$. This induces a commutative diagram between fundamental groups
\begin{equation}\label{diag: commutative diagram_pi1}
\begin{tikzcd}
	\pi_1(Y_1) \arrow{d}[swap]{(\iota_{10})_*} \arrow{dr}{(\iota_+)_*} &  \\
	\pi_1(W_0) \arrow{r}{j_*} & \pi_1(W).
\end{tikzcd} 
\end{equation}
By Theorem \ref{thm: ribbon to itself}, the map $(\iota_+)_*$ is an isomorphism. 
Since $W_0$ is a ribbon $\Q$-homology cobordism from $Y_0$ to $Y_1$
it follows from Proposition~\ref{prop:basics-ribbon-cobordism} (1) 
that the map $(\iota_{10})_*$ is a surjection. By commutativity of the diagram we conclude that $(\iota_{10})_*$ is a monomorphism.
Thus in summary it is in fact an isomorphism. 
\end{proof}

\bibliography{References}
\bibliographystyle{amsplain}
\end{document}